\documentclass{amsart}
\usepackage[utf8]{inputenc}
\usepackage[english]{babel}
\usepackage{csquotes}
\usepackage{graphicx}
\usepackage{indentfirst}
\usepackage{overpic}
\usepackage[dvipsnames]{xcolor}
\usepackage[export]{adjustbox}
\usepackage{amsthm}
\usepackage{amsfonts}
\usepackage{fullpage}
\usepackage{booktabs}
\usepackage{mathtools}
\usepackage[shortlabels]{enumitem}
\usepackage{float}
\usepackage{url}
\usepackage{moreenum}
\usepackage{multicol}
\usepackage{xtab}
\usepackage{array}
\usepackage{multirow}
\usepackage{url}
\usepackage{makecell}
\usepackage[margin=1in]{geometry}
\setlength{\tabcolsep}{3pt}

\newcommand{\gds}{$g_{ds}(K)$}
\newcommand{\s}[1]{$S^#1$}
\newcommand{\kn}{$K$}
\newcommand{\ita}[1]{\textit{#1}}
\newcommand{\g}[1]{$g_#1(K)$}
\newcommand{\gt}[2]{$#1g_#2(K)$}

\newtheorem{theorem}{Theorem}[section]
\newtheorem{lemma}[theorem]{Lemma}
\newtheorem{corollary}[theorem]{Corollary}
\newtheorem*{L1}{Lemma~\ref{lemma:p_n}}
\newtheorem*{L2}{Corollary~\ref{corollary:unknotting number}}
\newtheorem*{L3}{Lemma~\ref{lemma:genus}}

\theoremstyle{definition}
\newtheorem{define}[theorem]{Definition}

\theoremstyle{remark}

\newtheorem{remark}[theorem]{Remark}

\title{\textbf{\MakeUppercase{\Large{Determining the doubly slice genera of prime knots with up to 12 crossings}}}}

\author{Lucia P. Karageorghis}
\email{lucia.karag@gmail.com}
\address{Department of Mathematical Sciences, Durham University, UK}

\author{Frank Swenton}
\email{fswenton@middlebury.edu}
\address{Department of Mathematics, Middlebury College, USA}

\begin{document}

\maketitle

\begin{abstract}
    For a knot \kn{}, the doubly slice genus \gds{} is the minimal $g$ such that \kn{} divides a closed, orientable, and unknotted surface of genus $g$ embedded in \s4. In this paper, we identify the doubly slice genera of 2909 of the 2977 prime knots which have a crossing number of 12 or fewer.
\end{abstract}

\section{Introduction}
A knot $K$ is a smooth, 1-dimensional submanifold of \s3 that is homeomorphic to \s1. If a closed, orientable surface $\Sigma$, which is smoothly embedded in \s4, has a cross-section \s3 $\cap$ $\Sigma=$ \kn{}, then we say that \kn{} \ita{divides} the surface. In addition, $\Sigma$ is \textit{unknotted} if it bounds a smoothly embedded, 3-dimensional handlebody in \s4. If we can find a smooth embedding of \s2 in \s4 which is divided by \kn{} and unknotted, then we say that \kn{} is \ita{doubly slice}. In their 2015 paper \cite{livingston2015doubly}, Livingston and Meier concluded their discussion of doubly slice knots by introducing the \ita{doubly slice genus}. Just as we consider knots that divide unknotted spheres, we can consider knots that divide unknotted surfaces of higher genera. Accordingly, we define the doubly slice genus, \gds{}, as the smallest possible $g$ such that \kn{} divides an unknotted, closed, and orientable surface of genus $g$ smoothly embedded in \s4. In this paper, we determine the doubly slice genera of 2909 of the 2977 prime knots that have a crossing number of 12 or fewer. We do this by utilising previously established bounds on the doubly slice genus, in particular those given by McDonald \cite{McDonald2019band}, which employ the band unknotting number and band number of a knot, and by using a computer to search for sequences of oriented saddle moves.

The \ita{3-genus} of a knot $K$, denoted \g3, is the smallest genus amongst all Seifert surfaces for \kn{}. Furthermore, the \ita{smooth 4-genus}, \g4, is the smallest $h$ such that there exists a closed and orientable surface smoothly embedded in $D^4$ which has boundary \kn{} and genus $h$. As observed in papers \cite{livingston2015doubly} and \cite{McDonald2019band}, we have the following lemma. \begin{lemma} \label{lemma:genus} For a knot \kn{}, $2g_4(K) \leq g_{ds}(K)\leq 2g_3(K).$
 \end{lemma}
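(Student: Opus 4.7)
The plan is to prove the two inequalities separately; both are standard consequences of the geometric setup, but each requires a small verification.

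For the upper bound $g_{ds}(K) \leq 2g_3(K)$, I would start with a minimal-genus Seifert surface $F \subset S^3$ with $\partial F = K$ and $\mathrm{genus}(F) = g_3(K)$. Writing $S^4 = D^4_+ \cup_{S^3} D^4_-$, I would push the interior of $F$ slightly into $D^4_+$ to obtain $F_+$, and push a second copy into $D^4_-$ to obtain $F_-$, fixing $K \subset S^3$. The union $\Sigma := F_+ \cup_K F_-$ is a closed orientable surface of genus $2g_3(K)$, smoothly embedded in $S^4$, with $\Sigma \cap S^3 = K$. The main thing to check is that $\Sigma$ is unknotted. For this I would exhibit a bounding handlebody: take a bicollar $F \times [-\varepsilon, \varepsilon]$ of $F$ in $S^3$, and push the slice $F \times \{t\}$ into $D^4_+$ for $t>0$ and into $D^4_-$ for $t<0$, keeping $F \times \{0\}$ in $S^3$ and arranging that the outer slices coincide with $F_{\pm}$. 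The resulting 3-dimensional submanifold of $S^4$ is diffeomorphic to $F \times I$, which is a handlebody of genus $2g_3(K)$ since $F$ deformation retracts onto a wedge of $2g_3(K)$ circles, and its boundary is exactly $\Sigma$.

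For the lower bound $2g_4(K) \leq g_{ds}(K)$, I would take any unknotted closed orientable surface $\Sigma \subset S^4$ with $\Sigma \cap S^3 = K$ realizing the doubly slice genus, so $\mathrm{genus}(\Sigma) = g_{ds}(K)$. The equatorial $S^3$ separates $S^4$ into two 4-balls $D^4_\pm$, so the intersections $\Sigma_\pm := \Sigma \cap D^4_\pm$ are smoothly embedded compact orientable surfaces in $D^4_\pm$, each with boundary exactly $K$. Because $\Sigma_+$ and $\Sigma_-$ together form $\Sigma$ and lie on opposite sides of $S^3$, the curve $K$ is a separating simple closed curve on $\Sigma$, which forces the genus to be additive: $\mathrm{genus}(\Sigma_+) + \mathrm{genus}(\Sigma_-) = g_{ds}(K)$. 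Each $\Sigma_\pm$ is a smooth surface in a 4-ball with boundary $K$, so by definition of the smooth 4-genus we have $g_4(K) \leq \mathrm{genus}(\Sigma_\pm)$. Adding these two inequalities yields $2g_4(K) \leq g_{ds}(K)$.

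The step I expect to require the most care is the handlebody argument in the upper bound; the rest reduces to definition-chasing and the standard fact that a Seifert surface thickens to a handlebody. One should verify that the pushed bicollar construction produces a smoothly embedded 3-manifold in $S^4$ whose boundary is precisely $\Sigma$, with no extra pieces along the equator, and that the resulting handlebody is genuinely 3-dimensional (not collapsed). Everything else — the transverse intersection with $S^3$, additivity of genus under a separating cut, and the 4-genus lower bound on each half — is routine.
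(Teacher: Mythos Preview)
Your argument is correct and matches the paper's for the lower bound (split the surface along $S^3$, bound each piece below by $g_4(K)$) and for the construction of the surface in the upper bound (double a minimal Seifert surface $F$ across the equator). The one point of divergence is how unknottedness of the doubled surface is verified: the paper puts $F$ into disc--band form and argues that the double is then a collection of unknotted spheres joined by unknotted tubes, hence bounds a handlebody; you instead exhibit the bounding handlebody directly as a thickening $F \times I$. Both are standard and equivalent in strength; your version avoids the disc--band decomposition at the cost of the embedding check you flag. For that check, it is cleanest to take $F \times [-1,1]$ inside a collar $S^3 \times [-1,1]$ of the equator (rather than a bicollar of $F$ inside $S^3$): the boundary is then manifestly a closed genus-$2g_3(K)$ surface meeting $S^3 \times \{0\}$ exactly in $K$, and no separate ``no extra pieces along the equator'' verification is needed.
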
 We include the proof of this lemma in Section \ref{subsection:genus}.

An \textit{oriented saddle move} is a move performed on a knot diagram as illustrated in Figure \ref{fig:OSM}. \begin{figure}[h]
    \centering
    \includegraphics{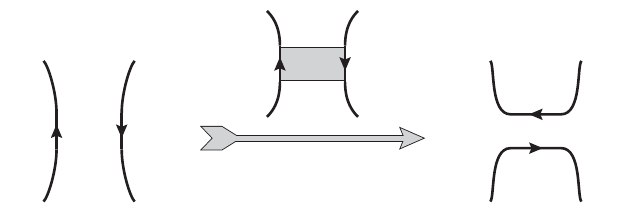}
    \caption{An oriented saddle move.}
    \label{fig:OSM}
\end{figure}
\begin{define} \label{def:unlinking_band_sequence} For a knot \kn{}, an \ita{unlinking band sequence} is a sequence of oriented saddle moves transforming \kn{} into an unlink. The \ita{band number}, $b(K)$, is the minimum length of all unlinking band sequences for $K$.\end{define}

In \cite{McDonald2019band}, McDonald gave an upper bound on the doubly slice genus as follows. \begin{theorem} \label{theorem:McDonald} For a knot \kn{}, $g_{ds}(K) \leq b(K).$ \end{theorem}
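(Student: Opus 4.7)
Fix an unlinking band sequence of length $b = b(K)$ for $K$, terminating in some $u$-component unlink $U$. The plan is to build, from this data, an unknotted closed orientable surface $\Sigma \subset S^4$ dividing $K$ and of genus at most $b$. I decompose $S^4 = D^4_+ \cup_{S^3} D^4_-$ with $K \subset S^3$, and fix collar neighborhoods $S^3 \times [0,1] \subset D^4_\pm$.

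First I construct a properly embedded surface $\Sigma^+ \subset D^4_+$ with $\partial \Sigma^+ = K$: in the collar of $D^4_+$, stack the $b$ band moves of the sequence at distinct heights, producing a cobordism from $K$ at height $0$ to $U$ at height $1$ of Euler characteristic $-b$; then cap off $U$ with $u$ disjoint disks in $D^4_+ \setminus (S^3 \times [0,1])$, using the fact that any unlink in $S^3$ bounds disjoint smooth disks in $B^4$. The mirror construction in $D^4_-$ yields $\Sigma^-$, and setting $\Sigma := \Sigma^+ \cup_K \Sigma^-$ gives a closed orientable surface with $\Sigma \cap S^3 = K$. A Morse-theoretic count ($2u$ disk caps and $2b$ saddle points) gives $\chi(\Sigma) = 2u - 2b$, so $g(\Sigma) = 1 - u + b \leq b$, since $u \geq 1$.

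The main obstacle is showing that $\Sigma$ is unknotted, i.e., that it bounds a smoothly embedded 3-dimensional handlebody $H \subset S^4$. My plan is to construct $H$ level-by-level in the height parameter: at each height $t$, the slice $H_t \subset S^3$ will be a compact orientable surface with $\partial H_t = \Sigma \cap (S^3 \times \{t\})$, and $H_0$ will be taken to be a Seifert surface for $K$ obtained by reading the band sequence as a band presentation. As $t$ varies, $H_t$ should change in sync with the movie of $\Sigma$: at each band move the band attaches to $H_t$ as a 2-dimensional 1-handle, and at each disk cap the corresponding boundary component of $H_t$ is capped off with a 3-ball inside $D^4_+$. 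The mirror half in $D^4_-$ is constructed symmetrically.

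The key technical difficulties will be ensuring that these level surfaces are embedded and extend compatibly — requiring a careful choice of the Seifert surface $H_0$ so that the successive band extensions in the collar do not self-intersect, which is where the extra dimension afforded by $S^4$ is essential — and verifying that the resulting 3-manifold $H$ is genuinely a handlebody of genus $b - u + 1$. The latter should follow because the induced Morse function on $H$ has only index-$0$ and index-$1$ critical points in the lower half (births of disks and band extensions), with the mirror-symmetric upper half providing cancelling higher-index handles, so $H$ is a handlebody whose boundary is exactly $\Sigma$.
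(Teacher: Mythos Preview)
The paper does not itself prove this theorem; it is quoted from McDonald's paper, so there is no in-paper argument to compare against. That said, your approach has a fatal gap, not merely an incomplete step.

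Your surface $\Sigma$ is the double of the ribbon surface $\Sigma^{+}$ determined by the band sequence, and you correctly compute $g(\Sigma)=b-u+1$. Notice this is \emph{strictly smaller} than $b$ whenever $u\geq 2$, so if your argument worked it would establish the stronger inequality $g_{ds}(K)\leq b-u+1$. That stronger inequality is false. Take $K=6_{1}$: it admits an unlinking band sequence of length $b=1$ ending in a $u=2$ component unlink (indeed this is exactly how the entry $g_{ds}(6_{1})=1$ in Table~1 is certified). Your $\Sigma^{+}$ is then a ribbon \emph{disk} for $6_{1}$, and your $\Sigma$ is a $2$-sphere; were it unknotted, $6_{1}$ would be doubly slice. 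But $6_{1}$ is slice and \emph{not} doubly slice. Hence your $\Sigma$ is knotted in this case, and the handlebody $H$ you propose cannot exist. The same obstruction applies to every slice-but-not-doubly-slice knot in Table~1 with a length-$1$ unlinking band sequence.

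The specific breakdown in your sketch is that the ``Seifert surface $H_{0}$ obtained by reading the band sequence as a band presentation'' is precisely the ribbon surface, which is in general only \emph{immersed} in $S^{3}$: the capping disks for $U$ may meet the bands in ribbon arcs. A genuinely embedded Seifert surface has no reason to contain the bands of the sequence, so the level-surface movie cannot be run as you describe. More conceptually, the double of a ribbon surface for $K$ is simply not always unknotted; that is exactly the obstruction preventing every ribbon knot from being doubly slice.

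McDonald's construction yields an unknotted surface of genus $b$, not $b-u+1$; one cannot retain all $u$ disk-caps on each side and still bound a handlebody. You should consult McDonald's paper for the actual construction.
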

\begin{corollary} \label{corollary:bound}
If a knot $K$ has an unlinking band sequence of length $n$, then $g_{ds}(K)\leq n$.
\end{corollary}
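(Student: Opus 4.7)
The plan is to derive this as an immediate consequence of Theorem~\ref{theorem:McDonald} together with the definition of the band number. First I would observe that by Definition~\ref{def:unlinking_band_sequence}, the band number $b(K)$ is defined as the \emph{minimum} length over all unlinking band sequences for $K$. Therefore, the mere existence of an unlinking band sequence of length $n$ yields the inequality $b(K) \leq n$, with no further work required.

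Next, I would invoke Theorem~\ref{theorem:McDonald}, which states $g_{ds}(K) \leq b(K)$. Chaining the two inequalities gives $g_{ds}(K) \leq b(K) \leq n$, which is the desired bound.

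There is essentially no obstacle here: the corollary is purely a restatement of the theorem in a form that is convenient for the computational work in the rest of the paper, where one exhibits explicit unlinking band sequences of various lengths (via a computer search for oriented saddle moves) without needing to certify that they are of minimal length. The main value of stating it separately is to license bounding $g_{ds}(K)$ directly from any found sequence, rather than first having to argue minimality to compute $b(K)$ exactly.
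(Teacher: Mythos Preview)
Your argument is correct and is exactly the intended one: the paper states this as an immediate corollary of Theorem~\ref{theorem:McDonald} with no separate proof, and your reasoning via $g_{ds}(K)\le b(K)\le n$ from the definition of $b(K)$ is precisely the implicit justification.
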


The bounds on \gds{} from Lemma \ref{lemma:genus} and Corollary~\ref{corollary:bound} have proven to be efficacious for determining the doubly slice genera of prime knots. Section \ref{subsection:table} contains a table that presents the doubly slice genera which we determined, and Section \ref{section:results} details the methods used for these computations. In the absence of an exact value, we provide an interval within which the doubly slice genus lies.

A \ita{slice knot} is a knot $K$ such that $g_4(K)=0$, i.e., $K\subset S^3=\partial D^4$ bounds a disc that is smoothly embedded in $D^4$. It is evident that our \gt24 lower bound from Lemma \ref{lemma:genus} is not useful for such knots. Note that if a knot \kn{} is doubly slice, then \kn{} is slice. 
\begin{remark} \label{remark:slice_categories} We shall call a knot \textit{undetermined} if it is a slice knot for which our work does not resolve whether or not the knot is doubly slice. (Fortunately, only three of the knots that we are studying are undetermined thanks to \cite{livingston2015doubly,Piccirillo_2020}.) We thus form three classes of slice knots: doubly slice (\gds{} $=0$), slice but not doubly slice (\gds{} $\geq 1$), and undetermined.\end{remark}

By Lemma \ref{lemma:genus} and Corollary \ref{corollary:bound}, we can deduce the next corollary.
\begin{corollary} \label{corollary:sequence}
Given a slice but not doubly slice knot $K_1$, if $K_1$ has an unlinking band sequence with length 1, then $g_{ds}(K_1)=1$. Given a non-slice knot $K_2$, if $K_2$ has an unlinking band sequence with length $2g_4(K_2)$, then $g_{ds}(K_2)=2g_4(K_2)$.  \end{corollary}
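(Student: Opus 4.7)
The plan is simply to sandwich $g_{ds}$ between a lower bound coming from Lemma~\ref{lemma:genus} and an upper bound coming from Corollary~\ref{corollary:bound}, so the proof will be essentially a one-line squeeze argument in each case. There is no real obstacle here; the only thing to be careful about is correctly translating the hypothesis \textit{slice but not doubly slice} into the lower bound $g_{ds}(K_1)\geq 1$, and unpacking the definition of \textit{non-slice} as $g_4(K_2)\geq 1$ so that the $2g_4$ lower bound is nontrivial.

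For $K_1$, I would first note that \textit{slice but not doubly slice} means, by definition, $g_4(K_1)=0$ and $g_{ds}(K_1)\neq 0$, so $g_{ds}(K_1)\geq 1$. Then Corollary~\ref{corollary:bound} applied with $n=1$ to the given unlinking band sequence yields $g_{ds}(K_1)\leq 1$, forcing equality.

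For $K_2$, the lower bound $g_{ds}(K_2)\geq 2g_4(K_2)$ is immediate from Lemma~\ref{lemma:genus} (this is the \textbf{$2g_4(K)\leq g_{ds}(K)$} half, and it is the only piece of Lemma~\ref{lemma:genus} needed). The matching upper bound $g_{ds}(K_2)\leq 2g_4(K_2)$ follows from Corollary~\ref{corollary:bound} applied with $n=2g_4(K_2)$ to the hypothesized unlinking band sequence. Together they give $g_{ds}(K_2)=2g_4(K_2)$.

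The only subtlety worth flagging explicitly is that in the $K_2$ case the hypothesis \textit{non-slice} is not strictly needed for the conclusion (the same squeeze works when $g_4(K_2)=0$, provided one also has a length-$0$ unlinking band sequence, i.e.\ the unknot), but stating it as \textit{non-slice} keeps the corollary aligned with the $K_1$ case and with the practical use later in Section~\ref{section:results}, where these two situations are handled separately when reading off $g_{ds}$ from the band-sequence search.
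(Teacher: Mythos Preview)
Your proof is correct and matches the paper's approach exactly: the paper states only that the corollary follows ``by Lemma~\ref{lemma:genus} and Corollary~\ref{corollary:bound},'' and your squeeze argument is precisely the intended justification. Your care in noting that the lower bound for $K_1$ comes from the \emph{not doubly slice} hypothesis (rather than from Lemma~\ref{lemma:genus}, which is vacuous when $g_4=0$) is a worthwhile clarification.
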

Thus, for slice but not doubly slice and non-slice knots, our objective is to find unlinking band sequences with length as close as possible to 1 and to $2g_4(K)$, respectively. For undetermined knots, we are simply looking for unlinking band sequences of the shortest possible length. The automated search for unlinking band sequences is described in Section \ref{section:automate}.
\begin{remark} \label{remark:interval}
When we started this work, there were 21 knots for which the smooth 4-genus was only known to lie within the interval $[1,2]$. In this case, we can use Lemma \ref{lemma:genus} to assert a lower bound of 2 on their doubly slice genera. What's more, in the course of searching for the doubly slice genera of these 21 knots, we proved that the 4-genus of 18 of them is exactly 1 (see Lemma \ref{lemma:interval}).
\end{remark}

An \ita{unknotting sequence} is a sequence of crossing changes performed on \kn{} that results in the unknot. As an alternative to systematically searching for unlinking band sequences with a computer, one can find unlinking band sequences via unknotting sequences. 
\begin{define} \normalfont Given a knot \kn{}, a \ita{$(p,n)$ unknotting sequence} for \kn{} is an unknotting sequence containing $p$ positive crossing changes and $n$ negative crossing changes. \end{define}
\begin{lemma} \label{lemma:p_n} Given a knot \kn{}, if there exists a $(p,n)$ unknotting sequence of \kn{}, then there exists an unlinking band sequence with length $2\cdot \max\{p,n\}$. In other words, \gds{} $\leq 2\cdot \max\{p,n\}$. \end{lemma}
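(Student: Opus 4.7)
The plan is to convert the given $(p, n)$ unknotting sequence of $K$ into an unlinking band sequence of length $2\max\{p, n\}$, after which Corollary~\ref{corollary:bound} delivers the desired bound on $g_{ds}(K)$.

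I would begin with the standard local observation that any single crossing change can be realised by two oriented saddle moves: at the crossing $c$, the first saddle performs the oriented Seifert smoothing of $c$ into two parallel strands, and the second saddle reattaches a band to reintroduce a crossing at the same location with the opposite sign. Applied to each of the $p + n$ crossing changes in the given sequence, this produces an unlinking band sequence from $K$ to the unknot of length $2(p + n)$, which only gives the weaker bound $g_{ds}(K) \leq 2(p + n)$.

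To obtain the stated sharper bound, I would pair the $\min\{p, n\}$ minority-sign crossing changes with majority-sign ones and show that each opposite-sign pair can be executed with only two oriented saddles instead of four. The key local construction is to choose, near the two crossings $c_+$ and $c_-$, a pair of oriented bands whose saddle moves jointly flip both crossings: the first band oriented-smooths both crossings simultaneously, and the second reintroduces them at their original locations with their signs reversed. Adding two saddles for each of the $|p - n|$ unpaired crossing changes then gives a total of $2\min\{p, n\} + 2|p - n| = 2\max\{p, n\}$ saddles from $K$ to the unknot, which constitutes the required unlinking band sequence.

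The main obstacle is justifying the pairing: that an opposite-sign pair of crossing changes, no matter how the two crossings sit in the diagram, can indeed be realised by only two oriented saddles. This likely requires a pre-isotopy bringing the two crossings into a common tangle region so that a single oriented band can span both, followed by a careful local diagrammatic verification that the chosen bands are compatible with the orientation of $K$. Once this local pair-saving move is established, concatenating the paired moves with the two-saddle realisation of each unpaired crossing change completes the unlinking band sequence, and Corollary~\ref{corollary:bound} yields $g_{ds}(K) \leq 2\max\{p, n\}$.
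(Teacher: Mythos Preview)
Your overall strategy matches the paper's: pair each minority-sign crossing change with one of opposite sign, realise each such pair with two saddles, realise each of the remaining $|p-n|$ crossing changes with two saddles, and count $2\min\{p,n\}+2|p-n|=2\max\{p,n\}$. The difficulty, as you correctly flag, is the pairing step, and this is where your proposal has a genuine gap.

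Your proposed mechanism --- ``the first band oriented-smooths both crossings simultaneously, and the second reintroduces them at their original locations with their signs reversed'' --- is not how a single oriented saddle move behaves. A band move alters the diagram only along the two short arcs where the band is attached; it cannot perform the oriented smoothing at two separate crossings at once, no matter how you pre-isotope the diagram. So the construction as described does not produce the two-saddle realisation you need, and the sketch of ``bringing the two crossings into a common tangle region'' does not repair this.

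The paper's mechanism is different and concrete. At the positive crossing $P$, perform a saddle from one strand of $P$ to itself passing under the other strand: this single move changes the crossing at $P$ \emph{and} leaves behind a small positively linked loop around the other strand. That loop is then slid by isotopy along the knot until it sits next to the negative crossing $N$; there a second saddle between the loop and an arc of $N$ simultaneously absorbs the loop and changes the crossing at $N$. Thus two saddles suffice for the balanced pair, with the transport between $P$ and $N$ handled by isotopy rather than by the bands themselves. Replacing your pairing construction with this sliding-loop argument (the paper's Lemma~\ref{lemma:balanced crossing change}) closes the gap, and the rest of your proof goes through.
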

We prove Lemma \ref{lemma:p_n} in Section \ref{subsection:balance}. 

In practice, we did not use Lemma \ref{lemma:p_n} to its full strength since, when conducting this research, there was not relevant data available. We instead used a weaker upper bound which exploits the unknotting number, $u(K)$, for which there was data.
\begin{corollary} \label{corollary:unknotting number}
Given a knot \kn{}, there exists an unlinking band sequence with length $2u(K)$. In other words, \gds{} $\leq 2u(K)$. \end{corollary}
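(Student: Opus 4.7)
The plan is to deduce this corollary directly from Lemma~\ref{lemma:p_n} by applying it to a minimum-length unknotting sequence of $K$.

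First, I would invoke the definition of the unknotting number: there exists an unknotting sequence for $K$ consisting of exactly $u(K)$ crossing changes. Partitioning these changes into positive and negative ones, this is a $(p,n)$ unknotting sequence with $p + n = u(K)$. Next, applying Lemma~\ref{lemma:p_n} to this sequence yields an unlinking band sequence of length $2 \cdot \max\{p,n\}$, and hence the bound $g_{ds}(K) \leq 2\cdot \max\{p,n\}$.

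To finish, I would simply observe that since $p, n \geq 0$ and $p + n = u(K)$, we have $\max\{p,n\} \leq p + n = u(K)$, so $2\cdot \max\{p,n\} \leq 2u(K)$. Composing this with the bound from Lemma~\ref{lemma:p_n} gives the required inequality $g_{ds}(K) \leq 2u(K)$ and, along the way, exhibits the desired unlinking band sequence of length at most $2u(K)$ (one can pad this sequence with trivial band moves if an exact length of $2u(K)$ is required by the statement).

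There is no real obstacle here, as the entire content of the corollary is a crude weakening of Lemma~\ref{lemma:p_n} obtained by forgetting the $(p,n)$ refinement in favor of the total count $p+n = u(K)$. The only thing to be careful about is the wording \textit{an unlinking band sequence with length $2u(K)$} versus \textit{at most $2u(K)$}; either interpretation is handled by the trivial padding remark above.
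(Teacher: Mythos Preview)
Your argument is correct, but it takes a different route from the paper. The paper proves this corollary directly from Lemma~\ref{lemma:crossing change}: a minimal unknotting sequence has $u(K)$ crossing changes, and each one is replaced by two oriented saddle moves, yielding an unlinking band sequence of length exactly $2u(K)$ with no padding needed. You instead pass through the stronger Lemma~\ref{lemma:p_n}, obtain a sequence of length $2\max\{p,n\}\le 2u(K)$, and then pad. This is logically fine since Lemma~\ref{lemma:p_n} does not depend on the corollary, but note that in the paper's development the corollary is actually established \emph{before} Lemma~\ref{lemma:p_n} (both proofs appear in Section~\ref{subsection:balance}, with the corollary coming first). Your route is therefore slightly circuitous: you invoke a result whose proof already subsumes the simpler argument the paper uses here. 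The paper's approach buys directness and the exact length $2u(K)$; yours illustrates that the corollary is a genuine weakening of Lemma~\ref{lemma:p_n}, which is a worthwhile observation in its own right.
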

Corollary \ref{corollary:unknotting number} is also proven in Section \ref{subsection:balance}.

\subsection{Acknowledgements}
\hfill{} \\ \indent{} Lucia would like to express her deep gratitude to Mark Powell for his invaluable support and guidance, which made this paper possible. A thanks is also due to Patrick Orson for his assistance with nomenclature, and Lemma \ref{lemma:p_n}. Furthermore, she is thankful to the London Mathematical Society for affording her an Undergraduate Research Bursary and to the Department of Mathematical Sciences at Durham University for supporting the work. Finally, she would like to show her appreciation to Frank Swenton for his willingness to assist with this project, as well as his generosity with his time and his enthusiasm.

\section{Results}
Our findings are presented in Table 1, which can be found in Section \ref{subsection:table}. The superscript of each \gds{} value in Table 1 corresponds to an item in the list below, which gives the method we used to arrive at said value. Throughout this list, a knot is described as non-slice if its exact smooth 4-genus was known when we began our work, unless stated otherwise, and \kn{} refers to any knot in the ``\kn{}" column.
\label{section:results}

\begin{enumerate}[label=(\greek*)]
    \item \label{method:double_slice} By Livingston and Meier \cite{livingston2015doubly}, \kn{} is doubly slice, i.e., \gds{} $=0$.
    
    \item \label{method:genera_equal} \kn{} is non-slice and $g_4(K)=g_3(K)$ \cite{knotinfo}. By Lemma \ref{lemma:genus}, \gds{} $=2g_4(K)=2g_3(K)$.
    
    \item \label{method:unknotting_number} \kn{} is non-slice and $u(K)=g_4(K)$ \cite{knotinfo}. By Corollary \ref{corollary:unknotting number}, there exists an unlinking band sequence of \kn{} with length $2g_4(K)$. So, by Corollary \ref{corollary:sequence}, \gds{} $=2g_4(K)$.
    
    \item \label{method:optimal_sequence} \kn{} is \{(i) slice but not doubly slice, (ii) non-slice\} \cite{livingston2015doubly,knotinfo}, and we found an unlinking band sequence of \kn{} with length \{(i) $1$, (ii) $2g_4(K)$\}. Applying Corollary \ref{corollary:sequence}, \gds{} $=$ \{(i) $1$, (ii) $2g_4(K)$\}.
    
    \item \label{optimal_sequence_Frank} \kn{} is slice but not doubly slice \cite{livingston2015doubly}, and by \cite{owens2021algorithm} there exists an unlinking band sequence of \kn{} with length $1$. By Corollary \ref{corollary:sequence}, \gds{} $=1$.
    
    \item \label{method:optimal_sequence_[1,2]} \kn{} is non-slice and, when we started our work, \g4 was known to be in $[1,2]$ \cite{knotinfo}. We found an unlinking band sequence of \kn{} with length 2. By Remark \ref{remark:interval}, \gds{} $=2$.
    
    \item \label{method:suboptimal_sequence} \kn{} is \{(i) undetermined, (ii) slice but not doubly slice, (iii) non-slice, (iv) non-slice and, when we started our work, \g4 was known to be in $[1,2]$\} \cite{livingston2015doubly,knotinfo}. We found an unlinking band sequence of \kn{} with length $q$ where \{(i) 0, (ii) 1, (iii) \gt24, (iv) 2\} $<q<2g_3(K)$. By Lemma \ref{lemma:genus}, Corollary \ref{corollary:bound}, and Remark \ref{remark:interval}, \{(i) 0, (ii) 1, (iii) \gt24 (iv) 2\} $\leq$ \gds{} $\leq q$. This was the tightest interval determined for \gds{}.
    
    \item \label{method:suboptimal_sequence_Frank} \kn{} is \{(i) undetermined, (ii) slice but not doubly slice\} \cite{livingston2015doubly}. By \cite{owens2021algorithm}, there exists an unlinking band sequence of \kn{} with length $q$ where \{(i) 0, (ii) 1\} $<q<2g_3(K)$. By Corollary \ref{corollary:bound}, \{(i) 0, (ii) 1\} $\leq$ \gds{} $\leq q$. This was the tightest interval determined for \gds{}.
    
    \item \label{method:suboptimal_sequence_unknotting} \kn{} is \{(i) slice but not doubly slice, (ii) non-slice and, when we started our work, \g4 was known to be in $[1,2]$\} \cite{livingston2015doubly,knotinfo}. We have that \{(i) 1, (ii) 2\} $<2u(K)<2g_3(K)$. By Remark \ref{remark:interval}, and Corollary \ref{corollary:unknotting number}, \{(i) 1, (ii) 2\} $\leq$ \gds{} $\leq 2u(K)$. This was the tightest interval determined for \gds{}.
\end{enumerate}

\begin{remark}
For the knots where we have not pinned down \gds{} exactly, we have that \gds{} is contained in either $[0,1]$, $[1,2]$, $[2,3]$ or $[2,4]$. 

Lemma \ref{lemma:p_n} can only be used to find unlinking band sequences of even length. For that reason, it could potentially help our cause further by identifying unlinking band sequences with length 2 for those knots where \gds{} is in $[2,3]$ or $[2,4]$. However, a search for (1,1) unknotting sequences for those knots which had an unknotting number of $\leq2$, or those which had an unknotting number contained in an interval that overlapped with $[1,2]$, yielded none. (Any (1,0) or (0,1) sequences have already been accounted for, either via Corollary \ref{corollary:unknotting number} or the fact that the unlinking band sequence associated with a (1,0) or (0,1) unknotting sequence can be achieved with 2 bands with length 1 and, therefore, would have been identified in our search for unlinking band sequences---see Section \ref{section:automate}.)

In addition, \cite[Theorem~1.1]{orson2020lower} describes a further lower bound on \gds{} using the signature function of \kn{}. However, this lower bound does not improve upon that which we have given.
\end{remark}

\section{Bounds on \gds{}}
\label{section:bounds}

\subsection{$g_3(K)$ and $g_4(K)$}
\label{subsection:genus}
\begin{L3} For a knot \kn{}, $2g_4(K) \stackrel{(1)}{\le} g_{ds}(K) \stackrel{(2)}{\le} 2g_3(K)$.
\end{L3}
\begin{proof}
\hfill \begin{enumerate}
\item Given a closed and orientable surface $W$ smoothly embedded in $S^4 = D^4_1\cup_{S^3}D^4_2$ for which $W\cap S^3 = K$, we have that $W = R_1\cup_KR_2$, where $R_1\subseteq D^4_1$ and $R_2\subseteq D^4_2$ are closed and orientable surfaces with boundary $K$ (also smoothly embedded). Inevitably, the genera of $R_1$ and $R_2$ will be at least $g_4(K)$, which means that the genus of $W$ must be at least $2g_4(K)$.

\item Let $S$ be a Seifert surface of \kn{} which attains the genus \g3. We embed two copies of $S$ into the equatorial $S^3$ of $S^4 = D^4_1\cup_{S^3}D^4_2$. We then push the interior of one copy of $S$ into $D^4_1$, forming $S_+$, and, in a similar fashion, push the interior of the other copy of $S$ into $D^4_2$, forming $S_-$. Let $B = S_+\cup_KS_-$. Note that we can construct $B$ in such a way that $B$ is smoothly embedded in \s4. Since both $S_+$ and $S_-$ have genus \g3, the resulting surface $B$ will have genus $2g_3(K)$. 

We can think of $S$ as being in a disc-band form; let us say that $S$ is composed of $d$ discs and $b$ bands (such that $d-b=1-2g_3(K)$). When we construct $B$, the discs of $S$ become unknotted spheres and the bands become unknotted tubes, so $B$ is made up of $d$ unknotted spheres connected by $b$ unknotted tubes. Consequently, $B$ bounds a handlebody, which means that $B$ is unknotted.

Hence, $B$ is a closed, orientable, and unknotted surface of genus $2g_3(K)$ smoothly embedded in $S^4$ which is divided by $K$. This means that, \gds{} is at most \gt23. \qedhere
\end{enumerate}
\end{proof}

\subsection{Balanced crossing changes} \label{subsection:balance}
\indent \begin{lemma} \label{lemma:crossing change} Given any crossing $C$ of a knot, two oriented saddle moves can be used to engender a crossing change at $C$. \end{lemma}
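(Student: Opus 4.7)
The plan is to argue locally inside a small $3$-ball $B$ centered at $C$; since nothing outside $B$ changes, it suffices to exhibit two oriented saddle moves whose net effect inside $B$ is precisely to swap over and under at $C$. Inside $B$, the knot is just two oriented strands that cross, with a prescribed over/under relation, and each oriented saddle move will be described by specifying the embedded rectangular band that implements it.

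First, I would perform the oriented saddle move at $C$ that realizes the oriented resolution of the crossing: its band, drawn in the plane of the diagram, has short edges on the two strands just off of $C$, and the resulting surgery is consistent with the strand orientations. After this move, the local picture inside $B$ consists of two disjoint oriented arcs, with no crossing. Then I would perform a second oriented saddle move whose band connects these two arcs transversely to the first; by choosing the three-dimensional embedding of this second band so that it passes above what was the understrand and below what was the overstrand, the surgery re-forms a crossing at $C$ but with the opposite over/under. Both bands are coherent with the orientations of the strands they meet, so both saddles qualify as oriented in the sense of Figure~\ref{fig:OSM}, and the composition leaves the diagram unchanged outside $B$ while implementing exactly a crossing change at $C$.

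The main obstacle is making the over/under bookkeeping of the second band rigorous: one must check that an embedded, orientation-respecting band in $B$ really can be drawn so that its surgery reverses the over/under at the re-created crossing, and that this reversal is independent of the sign of the original crossing. This is most transparent via a labeled figure showing the two bands, their orientations, and the heights of the arcs involved. Once the picture is drawn, the argument reduces to reading off that each saddle respects orientation and that the output agrees with the input outside $B$ and has opposite over/under inside $B$.
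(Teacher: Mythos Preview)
Your approach is correct but genuinely different from the paper's. You resolve the crossing by an oriented smoothing and then perform the inverse of the oriented smoothing of the \emph{opposite} crossing; since both smoothings (of a positive and of a negative crossing) are oriented saddle moves with the same local output, this yields the crossing change in two moves. The paper instead performs a first saddle from the overstrand $u$ to itself, which already effects the crossing change but spawns a small loop $n$ encircling the understrand $l$; the second saddle, between $n$ and $l$, simply absorbs that loop. Your construction is arguably the more symmetric one for this lemma in isolation. The payoff of the paper's construction, however, is that the auxiliary loop $n$ carries a sign (it is a positive loop when the changed crossing was positive), and it can be slid along the knot before being annihilated. This is exactly what is exploited in the next lemma on balanced crossing changes: instead of cancelling $n$ locally, one transports it to a crossing of the opposite sign and uses the second saddle there, so that two crossing changes are achieved with only two saddles. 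Your smooth/un-smooth decomposition does not produce such a transportable object, so if you adopt it you would need a separate mechanism to recover Lemma~\ref{lemma:balanced crossing change}.
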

\begin{proof} Let $u$ and $l$ be the two arcs that make up $C$. We can execute an oriented saddle move from $u$ to itself so that a crossing change occurs at $C$, and we are left with a newly formed loop, $n$, around $l$, as in Figure \ref{fig:crossingchange}. \begin{figure}[h]
    \centering
    \includegraphics{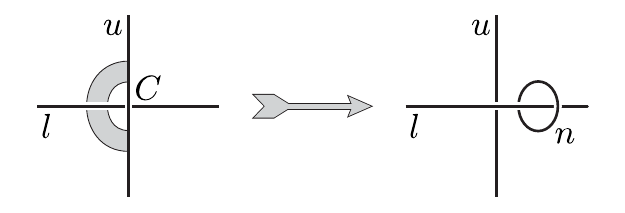}
    \caption{First oriented saddle move required in order to engender a crossing change at C.}
    \label{fig:crossingchange}
\end{figure}
We can then eliminate $n$ with one more oriented saddle move between itself and $l$.
\end{proof}

\begin{L2}
Given a knot \kn{}, there exists an unlinking band sequence with length $2u(K)$. In other words, $g_{ds}(K)\leq 2u(K)$. \end{L2}
\begin{proof} This follows directly from Lemma \ref{lemma:crossing change}. \end{proof}

The upper bound on \gds{} from Corollary \ref{corollary:unknotting number} can be refined by considering the sign of crossing changes.

We say that we have carried out a \ita{balanced crossing change} on a knot \kn{} when we perform two crossing changes on \kn{}, where one is at a positive crossing and the other is at a negative crossing.\begin{lemma} \label{lemma:balanced crossing change} Given a knot \kn{}, let $(P,N)$ be a pair of crossings of \kn{}. If $P$ is positive and $N$ is negative, then there exists two oriented saddle moves which carry out the balanced crossing change of $P$ and $N$.\end{lemma}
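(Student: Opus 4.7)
The plan is to construct two explicit oriented saddle moves on $K$ whose composition yields $K'$, the knot obtained from $K$ by the balanced crossing change at $(P, N)$. A naive application of Lemma \ref{lemma:crossing change} separately at $P$ and at $N$ costs four saddle moves; the strategy is to exploit the opposite signs of $P$ and $N$ to merge the ``clean-up'' stages into a single band, cutting the count in half.

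For the first saddle move, I would apply the construction of Lemma \ref{lemma:crossing change} at $P$: a single oriented saddle from the upper arc of $P$ to itself switches the crossing at $P$ and leaves a small loop $n_P$ encircling the lower arc at $P$.

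For the second saddle move, rather than applying Lemma \ref{lemma:crossing change} afresh at $N$ (which would use two further saddles and produce a second unwanted loop), I would construct a single oriented band $\beta$ running from $n_P$ across the diagram to a neighbourhood of $N$. The saddle move along $\beta$ is designed both to absorb $n_P$ (playing the role of the second move in Lemma \ref{lemma:crossing change} at $P$) and to switch the crossing at $N$ (playing the role of the first move in Lemma \ref{lemma:crossing change} at $N$), all in one stroke, yielding a diagram of $K$ with both $P$ and $N$ switched, i.e.\ a diagram of $K'$.

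The main obstacle is verifying that $\beta$ can be chosen as a coherently oriented band achieving both effects simultaneously. This is exactly where the sign hypothesis is essential: because $P$ is positive and $N$ is negative, the handedness of the leftover loop $n_P$ is opposite to the handedness of the loop that a fresh application of Lemma \ref{lemma:crossing change} at $N$ would create, so the two cancel across $\beta$ and no residual kink is left behind. Had $P$ and $N$ shared a sign, the handednesses would coincide and no single band could do the work of four, forcing the naive four-band construction. The argument closes with a short local orientation check near $P$, near $N$, and along $\beta$, ideally accompanied by a diagram in the spirit of Figure \ref{fig:crossingchange}.
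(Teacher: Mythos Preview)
Your proposal is correct and follows essentially the same approach as the paper: one saddle at $P$ to change that crossing and spawn a loop, then a second saddle that consumes the loop while changing $N$, with the sign hypothesis guaranteeing the orientations match. The only cosmetic difference is that the paper first isotopes the loop along $K$ until it sits beside $N$ and then applies a short local saddle there, whereas you package the isotopy and the saddle together as one long band $\beta$; these are equivalent descriptions of the same move.
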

\begin{proof} We start by performing the same initial move as in the proof of Lemma \ref{lemma:crossing change} on $P$. This gives rise to a crossing change at $P$ and generates an extra loop around $K$ to spare; let us call this loop $L$. Because $P$ was a positive crossing, $L$ is a positive loop around $K$. If we allow $L$ to travel around $K$ until $L$ reaches either arc of $N$, then we can perform a final oriented saddle move which brings about a crossing change at $N$ and removes $L$. An example of this is shown in Figure \ref{fig:posneg}. \end{proof}
\begin{figure}[H]
    \centering
    \includegraphics{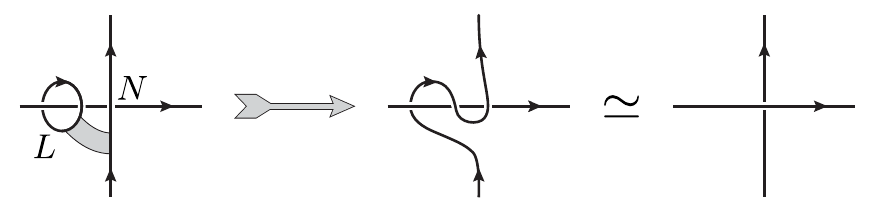}
    \caption{An example of the second oriented saddle move required to accomplish a balanced crossing change.}
    \label{fig:posneg}
\end{figure}

We now recall and give the proof of Lemma \ref{lemma:p_n}.

 \begin{L1} Given a knot \kn{}, if there exists a $(p,n)$ unknotting sequence of \kn{}, then there exists an unlinking band sequence with length $2\cdot \max\{p,n\}$. In other words, \gds{} $\leq 2\cdot \max\{p,n\}$. \end{L1}
\begin{proof} Without loss of generality, assume that $n\leq p$. The $(p,n)$ unknotting sequence can be achieved with $n$ balanced crossing changes and $(p-n)$ normal crossing changes. By Lemma \ref{lemma:crossing change} and Lemma \ref{lemma:balanced crossing change}, we can emulate this unknotting sequence by using $2n+2(p-n)=2p$ oriented saddle moves, i.e., there exists an unlinking band sequence for \kn{} with length $2p$. By Corollary \ref{corollary:bound}, \gds{} $\leq 2p = 2\cdot \max\{p,n\}$.\end{proof}

\section{Determining the smooth 4-genus}
\label{section:4-genus}
\begin{lemma} \label{lemma:interval}
The smooth 4-genus of the following knots is 1: $11n_{80}$, $12a_{187}$, $12a_{230}$, $12a_{317}$, $12a_{450}$ $12a_{570}$, $12a_{624}$, $12a_{636}$, $12a_{905}$, $12a_{1189}$, $12a_{1208}$, $12n_{52}$, $12n_{63}$, $12n_{225}$, $12n_{555}$, $12n_{558}$, $12n_{665}$, and $12n_{886}$.
\end{lemma}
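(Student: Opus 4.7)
The lower bound $g_4(K) \ge 1$ is already known for each of the 18 knots in the list, since at the outset of this project the smooth 4-genus of each was known to lie in $[1,2]$ (as discussed in Remark \ref{remark:interval}). So the task reduces to showing $g_4(K) \le 1$, i.e., to exhibiting, for each listed $K$, an orientable surface of genus $1$ smoothly embedded in $D^4$ with boundary $K$.

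The plan is to extract such a surface directly from a length-$2$ unlinking band sequence of $K$, produced by the automated search described in Section \ref{section:automate}. The general principle is as follows: given an unlinking band sequence of length $b$ transforming $K$ into an unlink $L$ with $m$ components, one obtains a surface $\Sigma \subset D^4$ with $\partial \Sigma = K$ by realizing the cobordism inside a collar neighborhood $S^3 \times [0,1)$ of $\partial D^4$ (starting from $K$ pushed slightly into the collar and attaching a 2-dimensional 1-handle for each oriented saddle move) and then capping off the components of $L$ with $m$ disjoint smooth disks in the remaining portion of $D^4$. An Euler characteristic count yields
\[
1 - 2g(\Sigma) \;=\; \chi(\Sigma) \;=\; -b + m,
\]
so $g(\Sigma) = (b - m + 1)/2$.

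For each of the 18 knots, the search provides a sequence with $b = 2$. Since $K$ is connected and each oriented saddle move changes the component count by exactly $\pm 1$, the resulting unlink $L$ has $m \in \{1,3\}$. The case $m = 3$ would give $g(\Sigma) = 0$, contradicting $g_4(K) \ge 1$, so necessarily $L$ is the unknot, $m = 1$, and $g(\Sigma) = 1$. Hence $g_4(K) \le 1$, and combined with the lower bound, $g_4(K) = 1$.

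The real content is the existence of the length-$2$ unlinking band sequences themselves; everything else is routine Euler characteristic bookkeeping. I therefore expect the main obstacle to be practical rather than theoretical: running the search of Section \ref{section:automate} on each of the 18 named knots and recording that a length-$2$ sequence is in fact returned, so that the dichotomy $m \in \{1,3\}$ can be applied.
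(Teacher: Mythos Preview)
Your overall strategy is sound, but there is a factual slip: the claim that the search yields $b=2$ for all $18$ knots is not quite right. For $12a_{905}$ and $12n_{555}$ the paper records the interval $[2,3]$ via method~\ref{method:suboptimal_sequence}(iv), meaning the shortest unlinking band sequence actually found has length $3$, not $2$; indeed the paper's own proof only asserts a sequence of length $<4$. Your Euler-characteristic argument extends immediately to $b=3$: then $m\in\{2,4\}$, the case $m=4$ gives $g(\Sigma)=0$ and is again excluded by $g_4(K)\geq 1$, so $m=2$ and $g(\Sigma)=1$. With that small amendment your proof goes through.

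As to approach, the paper argues by contradiction through the doubly slice genus: if $g_4(K)=2$ then Lemma~\ref{lemma:genus} and Corollary~\ref{corollary:bound} force every unlinking band sequence to have length $\geq 4$, contradicting the existence of one of length $<4$. Your argument bypasses $g_{ds}$ entirely, building the slice surface in $D^4$ directly from the band cobordism and reading off its genus. The two are equivalent in spirit (McDonald's inequality $g_{ds}(K)\leq b(K)$ is proved by exactly this kind of construction), but your route is pleasantly self-contained: it does not invoke Theorem~\ref{theorem:McDonald} at all, and it makes transparent why a length-$b$ sequence forces $g_4(K)\leq \lfloor b/2\rfloor$.
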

\begin{proof}
For all of the knots listed above, the smooth 4-genus was previously known to be in the interval $[1,2]$ \cite{knotinfo}. Moreover for each of these knots, we found an unlinking band sequence with length $<4$. 

Let $Q$ be any one of the knots from the list above. We shall assume that $g_4(Q)=2$. By Lemma \ref{lemma:genus} and Corollary \ref{corollary:bound}, any unlinking band sequence of $Q$ must have length $\geq4$. But, we have identified an unlinking band sequence with length $<4$. This is a contradiction. Hence, $g_4(Q)=1$.
\end{proof}

\section{Computation}
\label{section:automate}
Computation supporting the results of this paper was performed via a custom module of the Knot-Like Objects (KLO) software \cite{KLO}.  The results of the computation consist of explicit sequences of bands and simplifications showing decomposition of the relevant knots into unlinks. The data files can be downloaded at \path{http://klo-software.net/doublyslice} and browsed via the KLO software.

All searches were performed by enumerating all orientation-preserving bands on a given knot diagram, up to some specified \emph{length} (one plus the number of internal crossings the band makes) and absolute \emph{twist} of the band (relative to the blackboard framing of the diagram); this was performed iteratively after simplifying the resulting knot diagrams, up to some given \emph{depth} (the maximum number of iterations performed). The resulting diagrams were classified as follows:

\begin{itemize}
\item if a diagram could be simplified to one with no crossings, it was declared an \emph{unlink}, directly establising an upper bound for the number of oriented band-moves required to reduce the knot to an unlink (in a small number of cases, this entailed stronger algebraic tangle-simplification methods, in which case the final simplification moves after the final band are not recorded);

\item if a diagram gave a knot having any components with nonzero linking number, gave a hyperbolic knot, or gave a knot whose Alexander invariants were nontrivial, it was declared a \emph{non-unlink};

\item otherwise (and rarely), the diagram was declared a \emph{possible unlink}, as neither of the two other classifications were rigorously established for it as above.
\end{itemize}

\noindent%
Results are recorded in the data files only when the bands exhibited (via an outcome in the first case above) established the tightest upper bound on \gds{}.  A brief summary of the details of the steps taken follows, with $[DLT]$ giving the Depth, maximum Length, and maximum absolute Twist used in each search; each batch of knots starts as a list of knots names and their Dowker-Thistlewaite codes.

The first batch comprised $150$ knots having lower bound $1$; a $[111]$ band search reduced $131$ of these to unlinks, after which $[121]$ reduced an additional $14$, $[131]$ reduced one more, and $[142]$ came up empty.  The four unresolved knots were added to the next batch.

The second batch comprised $1868$ knots having lower bound $2$, along with the four unresolved knots from the previous batch; a $[211]$ band search reduced $1620$ to unlinks, after which $[222]$ reduced an additional $184$.  For each of the remaining $68$ knots, all flype-equivalent diagrams were enumerated, after which $[222]$ reduced an additional $7$ knots to unlinks, leaving $61$ unresolved.  For completeness, these $61$ knots were run through $[311]$ (note that the depth of $3$ in this search doesn't match the proven lower bounds for these knots), which reduced $58$ to unlinks.

The third batch comprised $172$ knots having lower bound $4$; as depth-four searches balloon significantly, an additional stipulation was added: that after simplifying the result of each band, the crossing count must not exceed that of the diagram before the band.  Even enforcing this crossing-monotonicity, a $[411]$ search reduced all $172$ to unlinks.

Six remaining knots having lower bound $6$ were manually reduced to unlinks by the first author using the KLO software, and the sequences of bands used for each were encoded for inclusion with the results from the computations above.

\section{Appendix}
\subsection{Table 1: The doubly slice genera of prime knots with up to 12 crossings} \label{subsection:table}
\hfill\\ In the table below, the superscript of each \gds{} value refers to the list found in Section \ref{section:results}.

\noindent\tiny\centering

\end{tabular}

\vspace{0.5in}

\bibliographystyle{amsalpha}
\bibliography{Bibliography}

\end{document}